\newtheorem{theorem}{Theorem}[section]
\newtheorem{lemma}[theorem]{Lemma}
\DeclareMathOperator{\ind}{ind}
\DeclareMathOperator{\ch}{ch}
\DeclareMathOperator{\sfl}{sf}
\DeclareMathOperator{\spin}{Spin}
\DeclareMathOperator{\End}{End}
\DeclareMathOperator{\Hom}{Hom}
\DeclareMathOperator{\GL}{GL}
\DeclareMathOperator{\SO}{SO}
\title{On the space of connections having non-trivial twisted harmonic spinors}
\author{Francesco Bei and Nils Waterstraat}
\begin{document}
\date{}
\maketitle

\footnotetext[1]{{\bf 2010 Mathematics Subject Classification: Primary 58J20; Secondary 58J30, 19K56}}
\footnotetext[2]{F. Bei was supported by the SFB 647 ``Space--Time--Matter''.}
\footnotetext[3]{N. Waterstraat was supported by the Berlin Mathematical School and the SFB 647 ``Space--Time--Matter''.}

\begin{abstract}
We consider Dirac operators on odd-dimensional compact spin manifolds which are twisted by a product bundle. We show that the space of connections on the twisting bundle which yield an invertible operator has infinitely many connected components if the untwisted Dirac operator is invertible and the dimension of the twisting bundle is sufficiently large. 
\end{abstract}

\section{Introduction}
Let $(M,g)$ be a compact oriented Riemannian manifold of dimension $n$ endowed with a spin structure. We denote by $\Sigma M$ the associated spinor bundle and by $\slashed D$ the classical Dirac operator acting on sections of $\Sigma M$. The spectrum of $\slashed D$ is well studied (cf.~\cite{Ginoux}), in particular, there exist lower bounds for the smallest eigenvalue of $\slashed D$. For example, Friedrich's inequality states that for $n\geq 2$ any eigenvalue $\lambda$ of $\slashed D$ satisfies

		\begin{align}\label{Friedrich}
			\lambda^2\geq\frac{n}{4(n-1)}S_0,
		\end{align}
where $S_0$ denotes the minimum of the scalar curvature of $M$. Given a Hermitian vector bundle with a compatible connection on $M$, one can define the twisted Dirac operator,	which plays an important role in Mathematical Physics. In their celebrated paper \cite{VaWi}, Vafa and Witten obtained universal upper bounds for the smallest eigenvalues of twisted Dirac operators depending only on	the geometry of the underlying manifold and not on the twisting bundle and its connection. It is a remarkable feature of their work that, although the results are purely analytical, the proofs are quite topological by using the Atiyah-Singer Index Theorem. Moreover, in odd dimensions
	also the spectral flow and its relation with the index come into play, as exposed by Atiyah, Patodi and Singer in \cite{AtiyahPatodi}. A rigorous mathematical treatment of these results was later
	given by Atiyah in \cite{Atiyah} and the ideas were used in other  publications (cf.~\cite{Davaux}, \cite{Herzlich}) concerning similar settings.\\
Lower bounds on the first eigenvalue for twisted Dirac operators were obtained in \cite{Miatello} under strong assumptions on the manifold $(M,g)$ and the connection on the twisting bundle.
	In \cite{Twisted} Jardim and Le\~ao showed that such a lower bound cannot exist in general: it depends strongly on the twisting connection. Indeed, they consider the convex space $\Omega^N$ of all compatible connections on the product bundle $M \times \mathbb{C}^N$ and the continuous functional
	
\begin{align}\label{functional}
	\lambda : \Omega^N \to \mathbb{R}, \qquad \nabla \mapsto |\lambda_1(\nabla)|,
\end{align}

which assigns to each connection the absolute value of its smallest non-vanishing eigenvalue. They assume that $N\geq n$ if $n$ is even, $N\geq\frac{n+1}{2}$ if $n$ is odd, and that the classical (untwisted) Dirac operator is invertible. Based on the methods used by Vafa and Witten in \cite{VaWi}, they show that there exist connections $\nabla_0$ and $\nabla_1$ on $M\times\mathbb{C}^N$ such that the twisted Dirac operator with respect to $\nabla_0$ is invertible while the one with respect to $\nabla_1$ has a non-trivial kernel. Since the functional \eqref{functional} is continuous, this proves that an estimate like \eqref{Friedrich} cannot exist independently of the twisting connection.\\
The aim of this article is to improve the result of Jardim and Le\~ao for odd-dimensional manifolds in the following sense. We assume as in \cite{Twisted} that $\ker\slashed D = \{0\}$ and that the dimension $N$ of the twisting bundle $M \times \mathbb{C}^N$ is not less than half the dimension of the underlying manifold $n$. The space of all compatible connections $\Omega^N$ on the twisting bundle carries a canonical topology when regarded as a subspace of all endomorphism-valued $1$-forms. We show that the space of all elements in $\Omega^N$ such that the associated twisted Dirac operator is invertible has infinitely many connected components.\\ 
The article is structured as follows. In the second section we outline twisted Dirac operators and the index formula, mainly to fix our notation. In the third section we state our theorem and we prove it in the fourth and last section, which is in its turn divided into four subsections. At first, we recall the definition and the main properties of the spectral flow, which is a homotopy invariant for paths of formally self-adjoint elliptic operators. In particular, we recall from \cite{AtiyahPatodi} that the spectral flow for closed paths can be computed as the index of an associated elliptic operator on $M\times S^1$. Subsequently, we discuss briefly some properties of the mapping degree for smooth maps between compact manifolds, and we outline Vafa and Witten's argument from \cite{VaWi} following Atiyah's lecture \cite{Atiyah}. We assemble these facts in order to construct for each integer $k$ a path of connections in $\Omega^N$ such that the corresponding path of twisted Dirac operators has spectral flow $k$. Finally, the result on the number of path components is obtained similarly as in the second-named author's paper \cite{SpinorsIch}.


\section{Twisted Dirac operators and the index formula}\label{Dirac}
Let $(M,g)$ be an oriented manifold of dimension $n\in\mathbb{N}$ and let $\SO(M)$ denote the principal bundle of positively oriented orthonormal frames on $M$. A spin structure on $M$ consists of a principal bundle $\spin(M)$ over $M$ and a map $\spin(M)\rightarrow\SO(M)$, such that the diagram 

\begin{align*}
\xymatrix{
\spin(M)\times\spin_n\ar[r]\ar[dd]&\spin(M)\ar[dd]\ar[dr]&\\
&&M\\
\SO(M)\times\SO_n\ar[r]&\SO(M)\ar[ur]&
}
\end{align*}
commutes, where $\spin_n\rightarrow\SO_n$ denotes the non-trivial two-folded covering of the special orthogonal group and the vertical arrows are given by the corresponding group actions. The $n$-dimensional complex spinor representation $\delta_n:\spin_n\rightarrow\GL(\Sigma_n)$, where $\Sigma_n$ is of dimension $2^{\lfloor\frac{n}{2}\rfloor}$, now defines the spinor bundle of the spin structure as the associated bundle $\Sigma M=\spin M\times_{\Delta_n}\Sigma_n$. The spinor bundle has a natural Clifford multiplication 

\[T_pM\times \Sigma_p\ni(X_p,s_p)\mapsto X_p\cdot s_p\in \Sigma_p,\]
a Hermitian metric $\langle\cdot,\cdot\rangle_\Sigma$ and a metric connection $\nabla^\Sigma$.\\
Let $E$ be a vector bundle over $M$ having a bundle metric $\langle\cdot,\cdot\rangle_E$ and a compatible connection $\nabla^E$. Then the tensor product $\Sigma M\times E$ inherits a Clifford multiplication, a Hermitian metric and a compatible connection. The twisted Dirac operator $D^{(E,\nabla^E)}:\Gamma(\Sigma M\otimes E)\rightarrow\Gamma(\Sigma M\otimes E)$ is defined by the composition of $\nabla^\Sigma\otimes\nabla^E:\Gamma(\Sigma M\otimes E)\rightarrow\Gamma(\Sigma M\otimes E\otimes TM)$ and the operator $\Gamma(\Sigma M\otimes E\otimes TM)\rightarrow\Gamma(\Sigma M\otimes E)$ induced by Clifford multiplication on $\Sigma M\otimes E$. If $\{e^1,\ldots, e^n\}$ is a local orthonormal frame and $\sigma\otimes s\in\Gamma(\Sigma M\otimes E)$, then we have locally

\begin{align}\label{Diraclocal}
\begin{split}
D^{(E,\nabla^E)}(\sigma\otimes s)&=\sum^n_{i=1}{(e^i\cdot\nabla^\Sigma_{e^i}\sigma)\otimes s}+\sum^n_{i=1}{(e^i\cdot\sigma)\otimes\nabla^E_{e^i}s}\\
&=(\slashed D\sigma)\otimes s+\sum^n_{i=1}{(e^i\cdot\sigma)\otimes\nabla^E_{e^i}s},
\end{split}
\end{align}       
where $\slashed D$ denotes the classical, untwisted Dirac operator acting on spinors.\\
The operators $D^{(E,\nabla^E)}$ are formally selfadjoint and consequently their Fredholm indices vanish. However, if $n$ is even, the bundle $\Sigma M\otimes E$ splits as a sum $(\Sigma M_+\otimes E)\oplus(\Sigma M_-\otimes E)$ and the twisted Dirac operators map $\Gamma(\Sigma M_\pm\otimes E)$ into $\Gamma(\Sigma M_\mp\otimes E)$. The restrictions of $D^{(E,\nabla^E)}_\pm$ to $\Gamma(\Sigma M_\pm\otimes E)$ are Fredholm operators and according to the Atiyah-Singer index theorem, their Fredholm indices are

\begin{align}\label{AS}
\ind D^{(E,\nabla^E)}_\pm=\pm(-1)^\frac{n}{2}\int_M{\hat{\mathbf{A}}(M)\,\ch(E)},
\end{align}
where $\ch(E)\in H^{2\ast}(M;\mathbb{Q})$ is the Chern character of the twisting bundle $E$ and $\hat{\mathbf{A}}(M)\in H^{4\ast}(M;\mathbb{Q})$ denotes the total $\hat{\mathbf{A}}$ class of $M$.


\section{The Theorem}
Let $(M,g)$ be an oriented Riemannian manifold of odd dimension $n=2m-1\geq 3$ with a fixed spin structure. We consider for $N\in\mathbb{N}$ the product bundle $\Theta^N=M\times\mathbb{C}^N$ with the Hermitian metric induced by the standard metric on $\mathbb{C}^N$. In what follows, we denote by $d$ the compatible connection on $\Theta^N$ which is given by the ordinary derivative, i.e. 

\[d_Xs=\sum^N_{i=1}{(X.s_i)\,e_i},\quad \text{where}\quad s(p)=\sum^N_{i=1}{s_i(p)\,e_i}\in\mathbb{C}^N,\,p\in M.\]
Then every connection on $\Theta^N$ can be written as $\nabla=d+\alpha$, where $\alpha$ is a smooth section of the bundle $\Hom(TM\otimes\Theta^N,\Theta^N)\approx T^\ast M\otimes\End(\Theta^N)$. Consequently, the space $\Omega^N$ of all compatible connections on $\Theta^N$ is a convex subspace of the space $\Gamma(T^\ast M\otimes\End(\Theta^N))$ with respect to the $C^0$-topology. In what follows we shorten notation by $D^\nabla:=D^{(\Theta^N,\nabla)}$ and we let $\Omega^N_{inv}$ be the set of all $\nabla\in\Omega^N$, such that $D^\nabla$ is invertible, i.e. $\ker D^\nabla$ is trivial. Elements in $\ker D^\nabla$ are called harmonic spinors. The aim of this note is to prove the following theorem.

\begin{theorem}\label{main}
If $\ker \slashed D=\{0\}$ and $N\geq m$, then $\Omega^N_{inv}$ has infinitely many connected components.
\end{theorem}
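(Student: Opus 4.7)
The plan is to use the spectral flow as a homotopy-invariant obstruction that distinguishes path components of $\Omega^N_{inv}$. First, the trivial connection $d$ itself lies in $\Omega^N_{inv}$: by the local formula \eqref{Diraclocal}, $D^d$ acts as the untwisted Dirac operator $\slashed D$ on each of the $N$ coordinates of $\Sigma M\otimes\mathbb{C}^N$, so $\ker D^d\cong(\ker\slashed D)^N=\{0\}$ under our hypothesis. Fix $\nabla_0:=d$ as basepoint. The core of the proof is to construct, for every $k\in\mathbb{Z}$, a path $\gamma_k\colon[0,1]\to\Omega^N$ from $\nabla_0$ to an endpoint $\nabla_k\in\Omega^N_{inv}$ whose family of twisted Dirac operators satisfies $\sfl(D^{\gamma_k})=k$.

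For the construction, the hypothesis $N\geq m$ enters through Bott periodicity and stability of the unitary groups: in this range $\pi_{2m-1}(U(N))\cong\mathbb{Z}$, so for every $k$ there is a smooth map $f_k\colon M\to U(N)$ representing $k$ times a chosen generator. Viewing $f_k$ as a unitary gauge transformation of $\Theta^N$ and using that $f_k^{-1}\,df_k$ is a smooth section of $T^\ast M\otimes\End(\Theta^N)$ with skew-Hermitian values, set
\[
\gamma_k(s)\;=\;d+s\,f_k^{-1}\,df_k,\qquad s\in[0,1].
\]
Then $\gamma_k(s)\in\Omega^N$ for all $s$, and the endpoint $\nabla_k:=\gamma_k(1)=f_k^{-1}\cdot d\cdot f_k$ is gauge equivalent to $d$, so $D^{\nabla_k}=(1\otimes f_k)^{-1}\,D^d\,(1\otimes f_k)$ is invertible and $\nabla_k\in\Omega^N_{inv}$.

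To evaluate $\sfl(D^{\gamma_k})$, observe that the identification of $\gamma_k(0)$ with $\gamma_k(1)$ through $f_k$ allows the family $\{D^{\gamma_k(s)}\}$ to descend to a bona-fide elliptic operator on $M\times S^1$ coupled to the mapping-torus bundle $E_{f_k}\to M\times S^1$. The Atiyah--Patodi--Singer correspondence equates $\sfl(D^{\gamma_k})$ with the Fredholm index of this operator, and \eqref{AS} then gives
\[
\sfl(D^{\gamma_k})\;=\;\int_{M\times S^1}\hat{\mathbf A}(M)\wedge\ch(E_{f_k}).
\]
Following the Vafa--Witten argument in the form exposed by Atiyah \cite{VaWi,Atiyah}, a transgression identifies the relevant top Chern-character component of $E_{f_k}$ with a universal constant multiple of $\tr\bigl((f_k^{-1}\,df_k)^{2m-1}\bigr)$ on $M$, whose integral detects the image of $[f_k]$ in $\pi_{2m-1}(U(N))\cong\mathbb{Z}$. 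Choosing the generator compatibly yields $\sfl(D^{\gamma_k})=k$. I expect this mapping-degree/transgression step to be the main technical obstacle, since it requires tracking normalization constants through Chern--Weil theory and verifying that the lower-degree terms of $\hat{\mathbf A}(M)\wedge\ch(E_{f_k})$ do not contribute on the $(2m)$-dimensional manifold $M\times S^1$.

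To conclude, assume for contradiction that $\Omega^N_{inv}$ has only finitely many path components. By the pigeonhole principle there exist $k\neq k'$ with $\nabla_k$ and $\nabla_{k'}$ in a common component; joining them by a path $\mu$ inside $\Omega^N_{inv}$ gives $\sfl(D^\mu)=0$, since every $D^{\mu(s)}$ is invertible. The concatenation $\gamma_{k'}^{-1}\cdot\mu\cdot\gamma_k$ is then a loop in the convex, hence contractible, space $\Omega^N$ based at $\nabla_0$, and additivity of the spectral flow yields $\sfl=k-k'\neq 0$. However, any loop in $\Omega^N$ is null-homotopic through loops based at $\nabla_0$; continuity of $\nabla\mapsto D^\nabla$ transports this null-homotopy into the space of self-adjoint Fredholm operators, where the spectral flow vanishes on null-homotopic loops based at an invertible operator. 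This contradiction, in the spirit of \cite{SpinorsIch}, forces $k=k'$, proving that $\Omega^N_{inv}$ has infinitely many connected components.
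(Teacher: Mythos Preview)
Your overall strategy coincides with the paper's: build paths $\gamma_k$ in $\Omega^N$ from $d$ to a gauge-equivalent connection, compute $\sfl(D^{\gamma_k})=k$ via the index on the mapping torus $M\times S^1$, and then use convexity of $\Omega^N$ together with vanishing of spectral flow along invertible paths to separate the endpoints into distinct components of $\Omega^N_{inv}$. The concluding contradiction argument is essentially identical to the paper's.

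The substantive difference is in how the maps into $U(N)$ are produced and how the index is computed. Your sentence ``for every $k$ there is a smooth map $f_k\colon M\to U(N)$ representing $k$ times a chosen generator'' is not well-posed as written: $\pi_{2m-1}(U(N))$ classifies maps from $S^{2m-1}$, not from $M$, so it is unclear what ``$k$ times a generator'' means for a map out of $M$. The paper resolves this by factoring: it fixes one generator $\tilde F\colon S^{2m-1}\to U(N)$ of $\pi_{2m-1}(U(N))$, invokes Hopf's theorem to get degree-$k$ maps $f_k\colon M\to S^{2m-1}$, and sets $F_k=\tilde F\circ f_k$. This factorisation is exactly what dissolves the technical obstacle you flag. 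The mapping-torus bundle $E^k$ on $M\times S^1$ is then the pullback by $f_k\times\mathrm{id}$ of a single bundle $H$ on $S^{2m-1}\times S^1$; since $H^{2i}(S^{2m-1}\times S^1)=0$ for $0<i<m$, all lower Chern classes of $H$ vanish automatically, so no cross-terms with $\hat{\mathbf A}(M)$ survive and the index collapses to $\deg(f_k)\cdot\int_{S^{2m-1}\times S^1}c_m(H)=k\cdot 1$. The normalisation $\int c_m(H)=1$ is precisely the Vafa--Witten input, so no Chern--Weil bookkeeping is needed.

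By contrast, your direct transgression approach on $M$ would in principle have to control the pairing of the lower components of $\ch(E_{f_k})$ with the higher $\hat{\mathbf A}$-classes of $M$, which for a general $f_k\colon M\to U(N)$ need not vanish. Choosing $f_k$ to factor through $S^{2m-1}$ (which is the natural way to make your construction precise) recovers the paper's argument.
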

Let us briefly comment on our assumptions in Theorem \ref{main}. It is a well known result of Lichnerowicz that $\ker\slashed D=\{0\}$ if $g$ is of positive scalar curvature (cf. \cite[Thm. 8.8]{Lawson}), and the classical Dirac operator is also invertible on all flat tori as long as the spin strucure is non-trivial (cf. \cite[Thm. 2.1.1]{Ginoux}). We strongly use that $\ker\slashed D=\{0\}$ in the proof of Theorem \ref{main} and we do not know how to remove this assumption in our argument. Moreover, we also need that $\pi_{2m-1}(U(N))\cong\mathbb{Z}$ if $N\geq m$, and so our arguments do not seem to generalise to lower dimensions of the twisting bundle.


\section{Proof of Theorem \ref{main}}

\subsection{The Spectral flow and its properties}
The spectral flow was defined by Atiyah, Patodi and Singer in \cite{AtiyahPatodi} in connection with the eta-invariant and asymmetry of spectra. Here we introduce it along the lines of \cite{BoWo} (cf. also \cite[\S 17]{BoWoBuch}). Let $W$ and $H$ be Hilbert spaces with a compact dense injection $W\hookrightarrow H$. We denote by $\mathcal{FS}(W,H)$ the subset of the space $\mathcal{L}(W,H)$ of all bounded operators with the norm topology, consisting of those elements which are selfadjoint when regarded as unbounded operators in $H$ with dense domain $W$. In what follows we consider $\mathcal{FS}(W,H)$ as a topological space with respect to the topology induced from $\mathcal{L}(W,H)$ (cf. \cite{Hamiltonian}).\\
Each element in $\mathcal{FS}(W,H)$ has a compact resolvent and hence it is in particular a Fredholm operator. Moreover, its spectrum is a discrete unbounded subset of the real line which consists solely of eigenvalues of finite multiplicity.\\
If $\mathcal{A}:I\rightarrow\mathcal{FS}(W,H)$ is a path, then there exists a countable number of continuous functions $j_k:I\rightarrow\mathbb{R}$, $k\in\mathbb{Z}$, such that $j_k(t)\leq j_{k+1}(t)$ for all $t\in I$ and the spectrum of $\mathcal{A}_t$ is given by $\{j_k(t):\, k\in\mathbb{Z}\}$, where the eigenvalues are listed according to their multiplicities. In what follows we assume that $\mathcal{A}$ has unitarily equivalent endpoints, i.e. there exists a unitary operator $U$ acting on $H$ such that $\mathcal{A}_0=U^\ast\mathcal{A}_1U$. Then $\mathcal{A}_0$ and $\mathcal{A}_1$ have the same eigenvalues with the same multiplicities and consequently there exists $l\in\mathbb{Z}$ such that $j_{k+l}(0)=j_k(1)$ for all $k\in\mathbb{Z}$. This integer is called the \textit{spectral flow} of $\mathcal{A}$ and we denote it by $\sfl(\mathcal{A})$. Let us point out that the spectral flow is the number of negative eigenvalues of $\mathcal{A}_0$ that become positive when the parameter travels from $0$ to $1$, minus the number of positive eigenvalues that become negative.\\
The following basic properties of the spectral flow are fundamental for our proof of Theorem \ref{main}.

\begin{lemma}\label{propSfl}
Let $\mathcal{A},\widetilde{\mathcal{A}}:I\rightarrow\mathcal{FS}(W,H)$ be two paths with unitarily equivalent ends.
\begin{enumerate}
	\item[i)] If $\widetilde{\mathcal{A}}_t=\mathcal{A}_{1-t}$, $t\in I$, then $\sfl(\mathcal{A})=-\sfl(\widetilde{\mathcal{A}})$.
	\item[ii)] If $\mathcal{A}_t=\mathcal{A}_0$ for all $t\in I$, i.e. $\mathcal{A}$ is a constant path, then $\sfl(\mathcal{A})=0$.
	\item[iii)] If $\mathcal{A}_1=\widetilde{\mathcal{A}}_0$, then $\sfl(\mathcal{A}\ast\widetilde{\mathcal{A}})=\sfl(\mathcal{A})+\sfl(\widetilde{\mathcal{A}})$.
	\item[iv)] If $\ker\mathcal{A}_t=\{0\}$ for all $t\in I$, then $\sfl(\mathcal{A})=0$.
	\item[v)] If $\mathcal{A},\widetilde{\mathcal{A}}$ are homotopic through a homotopy leaving the endpoints fixed, then $\sfl(\mathcal{A})=\sfl(\widetilde{\mathcal{A}})$. 
\end{enumerate}
\end{lemma}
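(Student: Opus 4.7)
My strategy is to derive every item from the parametrisation by continuous eigenvalue functions $j_k : I \to \mathbb{R}$ introduced just above the lemma, writing $l = \sfl(\mathcal{A})$ throughout.

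For (i), the reversed path $\widetilde{\mathcal{A}}_t = \mathcal{A}_{1-t}$ admits the continuous eigenvalue families $\tilde{j}_k(t) := j_k(1-t)$, so the defining relation $j_{k+l}(0) = j_k(1)$ rewrites as $\tilde{j}_{k-l}(1) = \tilde{j}_k(0)$, giving $\sfl(\widetilde{\mathcal{A}}) = -l$. Item (ii) is immediate since constant operators yield constant $j_k$. For (iii) I pick the individual families for $\mathcal{A}$ and $\widetilde{\mathcal{A}}$, match their indices at the common value $\mathcal{A}_1 = \widetilde{\mathcal{A}}_0$ by reindexing the second family (possible because both enumerate the same ordered spectrum), and concatenate to obtain continuous eigenvalue functions on $\mathcal{A}\ast\widetilde{\mathcal{A}}$; the total index shift between $t=0$ and $t=1$ is then the sum of the two individual shifts.

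For (iv), if $\ker \mathcal{A}_t = \{0\}$ for every $t \in I$ then no $j_k$ vanishes anywhere, so each $j_k$ has constant sign on $I$. Since $\mathcal{A}_0$ and $\mathcal{A}_1$ are unitarily equivalent they share the same ordered spectrum with multiplicities, and the preservation of signs along the path together with the fact that the $j_k$ are ordered and never cross forces the shift $l$ in $j_{k+l}(0) = j_k(1)$ to vanish.

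The main obstacle is (v). Given a homotopy $h : I \times I \to \mathcal{FS}(W,H)$ which fixes the endpoints, I would show that $s \mapsto \sfl(h(s,\cdot))$ is constant. The crucial step is to establish that for each $s_0 \in I$ one can select eigenvalue functions $j^{(s)}_k(t)$ of the path $t \mapsto h(s,t)$ depending continuously on $s$ in a neighbourhood of $s_0$. This joint continuity is the technical heart of the argument and would be extracted from the norm-continuity of the resolvent in $\mathcal{FS}(W,H)$ together with a Kato/Rouch\'e perturbation argument controlling the spectral projectors onto small intervals of the real line. Once joint continuity is available, the relation $j^{(s)}_{k+l(s)}(0) = j^{(s)}_k(1)$, with $h(s,0)$ and $h(s,1)$ (and therefore the intertwining unitary $U$) constant in $s$, determines $l(s)$ as a locally constant $\mathbb{Z}$-valued function on the connected interval $I$, hence constant, which gives $\sfl(\mathcal{A}) = \sfl(\widetilde{\mathcal{A}})$.
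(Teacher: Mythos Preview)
Your arguments for (i)--(iii) coincide with the paper's, which simply declares them direct consequences of the definition. For (iv) you give a direct argument (each $j_k$ keeps constant sign, so the index at which the sign changes is the same at $t=0$ and $t=1$, forcing $l=0$); the paper instead argues by contradiction, picking the branch $j_k$ with $j_k(0)$ the largest negative eigenvalue and using $\sfl(\mathcal{A})>0$ to force $j_k(1)\geq 0$, whence $j_k$ must vanish somewhere. Both are correct; your phrasing ``never cross'' is slightly inaccurate (branches may touch at multiplicities), but only the ordering $j_k\leq j_{k+1}$ is needed and your conclusion stands.

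For (v) the paper organises the argument differently: it concatenates $\mathcal{A}$ with the reverse of $\widetilde{\mathcal{A}}$ to obtain a closed loop $h$ based at $\mathcal{A}_0$, notes $\sfl(h)=\sfl(\mathcal{A})-\sfl(\widetilde{\mathcal{A}})$ by (i) and (iii), and then uses the given homotopy to contract $h$ to the constant path. Choosing ordered eigenvalue functions $j_k(\lambda,t)$ on this contracting square, three sides of the boundary carry the constant operator $\mathcal{A}_0$, so $j_k$ is constant there, whence $j_k(0,0)=j_k(0,1)$ and $\sfl(h)=0$. Your direct approach---showing $s\mapsto\sfl(h(s,\cdot))$ is locally constant by choosing jointly continuous branches $j_k^{(s)}(t)$---rests on exactly the same technical ingredient (existence of continuous two-parameter eigenvalue branches, via Kato-type perturbation of spectral projectors), and once that is granted your argument is equally valid. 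The paper's reduction via (i)--(iii) buys a slightly cleaner endgame (three constant sides force the fourth), while your route avoids the detour through loops; neither avoids the analytic input you correctly identify as the crux.
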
 

\begin{proof}
At first, the assertions i)-iii) are direct consequences of the definition. In order to show iv), we assume by contradiction that $\sfl(\mathcal{A})\neq 0$. Clearly, under this assumption the spectrum of $\mathcal{A}_0$ is neither a subset of $(-\infty,0)$ nor is it contained in $(0,\infty)$. Since $\sfl(-\mathcal{A})=-\sfl(\mathcal{A})$ by the very definition of the spectral flow, we can suppose that $\sfl\mathcal{A}>0$. Let $j_k:I\rightarrow\mathbb{R}$ be the function in the definition of the spectral flow such that $j_k(0)$ is the largest negative eigenvalue of $\mathcal{A}_0$, and $j_k(1)\geq j_i(1)$ for any $j_i:I\rightarrow\mathbb{R}$ such that $j_i(0)=j_k(0)$ if $j_k(0)$ is not a simple eigenvalue. Since $\sfl\mathcal{A}>0$, we have that $j_k(1)\geq0$. Consequently, there exists $t\in I$ such that $j_k(t)=0$ and hence $\ker\mathcal{A}_t\neq 0$ contradicting our assumption.\\
Finally, we show assertion v). Let $H:I\times I\rightarrow\mathcal{FS}(W,H)$ be a continuous map, such that $H(0,t)=\mathcal{A}_t$, $H(1,t)=\widetilde{\mathcal{A}}_t$ and $H(\lambda,0)$, $H(\lambda,1)$ are constant for $\lambda\in I$. We define a path $h:I\rightarrow\mathcal{FS}(W,H)$ by the concatenation

\begin{align*}
h_t=\begin{cases}
\mathcal{A}_{2t},\quad 0\leq t\leq\frac{1}{2}\\
\widetilde{A}_{2-2t},\quad \frac{1}{2}\leq t\leq 1
\end{cases}.
\end{align*}   
Then $\sfl(h)=\sfl(\mathcal{A})-\sfl(\widetilde{\mathcal{A}})$ by i) and iii), and $h$ can be deformed into the constant path $\mathcal{A}_0$ through an $\mathcal{A}_0$ preserving homotopy $G(\lambda,t)$, i.e. $G(0,t)=h_t$, $G(1,t)=\mathcal{A}_0$ for all $t\in I$, and $G(\lambda,0)=G(\lambda,1)=\mathcal{A}_0$ for all $\lambda\in I$. Now, if $\{j_k(\lambda,t)\}_{k\in\mathbb{Z}}$ denotes the sequence of eigenvalues of $G(\lambda,t)$ ordered as in the definition of the spectral flow, we obtain for $k\in\mathbb{Z}$

\[j_k(0,0)=j_k(1,0)=j_k(1,1)=j_k(0,1)\]
which shows that $\sfl h=0$, and consequently, $\sfl(\mathcal{A})=\sfl(\widetilde{\mathcal{A}})$.
\end{proof}

Let $M$ be a closed Riemannian manifold and $B=\{B_t\}_{t\in I}$ a path of formally selfadjoint elliptic operators acting on sections of a Hermitian bundle $E$ over $M$, such that the coefficients depend smoothly on the parameter $t\in I$. Setting $W:=H^1(M,E)$ and $H:=L^2(M,E)$, we are in the situation described above. It was shown by Atiyah, Patodi and Singer in \cite{AtiyahPatodi} that the spectral flow of $B$ can be computed easily if $B_0=B_1$, i.e, if the family is periodic (cf. also \cite[Theorem 17.13]{BoWoBuch}). Let $\pi:M\times S^1\rightarrow M$ be the canonical projection. If we pullback $E$ to $M\times S^1$ by $\pi$, then we obtain an elliptic operator $\hat{B}$ acting on sections of $\pi^\ast E$ by $-\frac{\partial}{\partial t}+B_t$. Now the spectral flow of the path $B$ is given by

\begin{align}\label{sfl=index} 
\sfl(\{B_t\}_{t\in I})=\ind(\hat{B}).
\end{align}
Actually, it can be shown that this formula extends to non-closed paths as follows (cf. \cite[\S 14.2]{Mrowka}): Let again $B=\{B_t\}_{t\in I}$ be a path of formally selfadjoint elliptic operators as above. Now let us no longer assume that $B_0=B_1$, but that there exists a unitary bundle automorphism $F:E\rightarrow E$ such that $B_0=F^\ast B_1F$ which means in particular that $B$ has unitarily equivalent endpoints. We consider on $I\times E$ the equivalence relation

\[(t,u)\sim(s,v):\Longleftrightarrow t=0, s=1;\quad u=Fv.\]
Then the quotient is a Hermitian vector bundle $\hat{E}$ over $M\times S^1$ and the spectral flow of the path $B$ is given by the index of the differential operator $-\frac{\partial}{\partial t}+B_t$ acting on sections of $\hat{E}$.

\subsection{The mapping degree}
The aim of this section is to recall briefly the mapping degree for maps between oriented compact manifolds $M$ and $N$ of the same dimension $n$. Since $H^n(M)$ and $H^n(N)$ are infinitely cyclic, for every continuous map $f:M\rightarrow N$ there exists a unique integer $k$ such that the induced map $f^\ast:H^n(N)\rightarrow H^n(M)$ is multiplication by $k$. We denote this integer by $\deg(f)$. It follows from the definition that if $P$ is another compact oriented manifold and $g:N\rightarrow P$ continuous, then

\begin{align}\label{degmult}
\deg(g\circ f)=\deg(g)\deg(f),
\end{align} 
and moreover, $|\deg(f)|=1$ if $f$ is a homeomorphism. We will use below that if $M$, $N$ are smooth oriented compact manifolds and $f:M\rightarrow N$ is a smooth map, then

\begin{align}\label{degform}
\int_Mf^\ast\omega=\deg(f)\,\int_N\omega
\end{align} 
for any $n$-form $\omega$ on $N$. Moreover, given four compact smooth and oriented manifolds $M$, $M'$, $N$ and  $N'$ with $\dim(M)=\dim(M')=m$, $\dim(N)=\dim(N')=n$ and two maps $f:M\rightarrow M'$ and $g:N\rightarrow N'$, the degree of the product map $f\times g:M\times N\rightarrow M'\times N'$ satisfies 

\begin{align}\label{degprod}
\deg(f\times g)=\deg(f)\deg(g).
\end{align}
Indeed, let $\omega$ be a generator of $H^m(M')$ and let $\eta$ be a generator of $H^n(N')$. Then $\omega\otimes\eta$ is a generator of $H^{m+n}(M'\times N')$ and we have 

\[\int_{M\times N}(f\times g)^*(\omega\otimes \eta)=\left(\int_Mf^*\omega\right)\left(\int_Ng^*\eta\right)=\deg(f)\deg(g).\]  
Finally, we recall the following theorem of Hopf which we will need in the next section (cf. \cite[V.11.6]{Bredon}).

\begin{theorem}\label{thm-deg}
Let $M$ be a compact oriented smooth manifold of dimension $n\in\mathbb{N}$. Then for every $k\in\mathbb{Z}$ there exists a map $f_k:M\rightarrow S^n$ such that $\deg(f_k)=k$.
\end{theorem}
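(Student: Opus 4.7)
The plan is to construct $f_k$ by an explicit pinching-and-collapsing procedure: crushing the complement of a small coordinate ball in $M$ to a point yields a map $M\to S^n$ of degree $\pm 1$; performing this simultaneously on $k$ disjoint balls gives degree $\pm k$; and the sign is flipped by post-composing with a reflection of $S^n$, which has degree $-1$ (using \eqref{degmult}).

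Concretely, I would pick $k$ pairwise disjoint closed coordinate balls $\bar B_1,\dots,\bar B_k\subset M$ and orientation-preserving diffeomorphisms $\varphi_i$ of $\bar B_i$ onto the closed unit ball $\bar B^n\subset\mathbb{R}^n$, fix a basepoint $p_0\in S^n$, and choose a smooth map $q:\mathbb{R}^n\to S^n$ which restricts to an orientation-preserving diffeomorphism from the open unit ball onto $S^n\setminus\{p_0\}$ and is identically $p_0$ on the complement of the open unit ball (such a $q$ is built from stereographic projection by a radial rescaling whose derivatives vanish to all orders at the unit sphere). Setting
\begin{align*}
f_k(x)=\begin{cases} q(\varphi_i(x)), & x\in \bar B_i,\\ p_0, & x\in M\setminus\bigcup_{i=1}^k\mathrm{int}\,\bar B_i,\end{cases}
\end{align*}
the two clauses agree on each $\partial \bar B_i$, so $f_k$ is well defined and smooth. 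To evaluate $\deg(f_k)$ I would apply \eqref{degform} with a top form $\omega$ on $S^n$ supported in a small neighbourhood of a regular value $y\ne p_0$: then $f_k^{-1}(\supp\omega)$ consists of $k$ disjoint open subsets of the $B_i$, on each of which $f_k$ restricts to an orientation-preserving diffeomorphism onto its image, so by the change-of-variables formula
\begin{align*}
\int_M f_k^\ast\omega=\sum_{i=1}^k\int_{\bar B_i}f_k^\ast\omega=k\int_{S^n}\omega,
\end{align*}
which gives $\deg(f_k)=k$. The case $k=0$ is handled by a constant map, and for $k<0$ one sets $f_k=r\circ f_{|k|}$ where $r:S^n\to S^n$ is a hyperplane reflection, and invokes \eqref{degmult}.

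The main technical point is the construction of the model map $q$ with sufficiently smooth behaviour at the unit sphere to ensure $f_k$ is smooth (or at least regular enough to apply \eqref{degform}); once this is in place, the orientation bookkeeping is automatic, since every $\varphi_i$ and $q|_{\mathrm{int}\,B^n}$ was chosen orientation-preserving, so the $k$ local contributions at the preimages of $y$ have the same sign and genuinely add up to $k$ rather than partially cancelling.
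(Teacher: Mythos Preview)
Your construction is correct and is the standard ``pinch'' argument for producing maps of prescribed degree. However, note that the paper does not actually prove Theorem~\ref{thm-deg}: it is only stated as a classical result of Hopf, with a reference to \cite[V.11.6]{Bredon}. So there is no ``paper's own proof'' to compare against; you have supplied a proof where the authors simply cited one.

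For what it is worth, the argument in Bredon proceeds along the same lines as yours (collapsing the complement of disjoint Euclidean balls and summing local degrees), so your approach is entirely in the expected spirit. One small remark: in the context of this paper you only need $f_k$ to be continuous, since the degree is defined via $f_k^\ast:H^n(S^n)\to H^n(M)$ and the subsequent use in Section~4.3 only requires pulling back characteristic classes. Thus the delicate smoothness of the model map $q$ at the unit sphere, which you flag as the main technical point, is not strictly necessary here; a merely continuous $f_k$ (or a smooth approximation of it, which is automatically homotopic and hence of the same degree) would suffice.
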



\subsection{Vafa and Witten's argument revisited}
As before we denote by $d$ the trivial connection in the bundle $\Theta^N$ over $M$, and we note that $\ker D^d=\{0\}$ since $\slashed D$ and $D^d$ have the same eigenvalues up to multiplicities. The following argument follows Vafa and Witten \cite{VaWi}, but we particularly refer to Atiyah \cite{Atiyah} and Jardim and Le\~ao \cite{Twisted}.\\
Since $2m-1$ is by assumption in the stable range of $U(N)$, there exists a generator $\tilde{F}:S^{2m-1}\rightarrow U(N)$ of $\pi_{2m-1}(U(N))\cong\mathbb{Z}$. Moreover, by Theorem \ref{thm-deg} there is for every $k\in\mathbb{Z}$ a map $f_k:M\rightarrow S^{2m-1}$ of degree $k$, and we denote by $F_k=\tilde{F}\circ f_k:M\rightarrow U(N)$ the composition. Note that $F_k$ induces a unitary isomorphism $L^2(M,\Theta^N)\rightarrow L^2(M,\Theta^N)$ which we denote also by $F_k$ in what follows.\\
We now consider the path of connections $\{\nabla^k_t\}_{t\in I}$ defined by 

\begin{align}\label{connections}
\nabla^k_t:=d+t\cdot F^{-1}_kdF_k\in\Omega^N
\end{align}
and we set for simplicity of notation $D^k_t:=D^{\nabla^k_t}$. By an elementary computation, we obtain from \eqref{Diraclocal} that

\[D^k_t=D^d+t\cdot X,\]
where $X$ is a multiplication operator which is locally given by 

\[X(\sigma\otimes s)=\sum^{2m-1}_{i=1}{(e_i\cdot\sigma)\otimes (F^{-1}_k(\partial_iF_k)s)}.\]
In particular, the path $\{D^k_t\}_{t\in I}$ is continuous, and moreover, it is readily seen that $D^k_1=F^\ast_k D^d F_k$ so that $D^k_1$ and  $D^k_0=D^d$ are unitarily equivalent. Consequently, the spectral flow of $\{D^k_t\}_{t\in I}$ is defined and we can apply \eqref{sfl=index}: Consider on the product $\mathbb{C}^N\times S^{2m-1}\times I$ the equivalence relation

\[(u,p,t)\sim(v,q,s):\Longleftrightarrow p=q,\, v=\tilde F(p)u,\, t=0,s=1.\]
The quotient is a bundle $H$ over $S^{2m-1}\times S^1$ and we denote its pullback to $M\times S^1$ by $f_k\times id$ by $E^k$. Now the spectral flow of $D^k$ is the index of the operator $-\frac{\partial}{\partial t}+D^k_t$ acting on sections of $E^k$.\\
By definition, the connections $\nabla^k_t$ descend to a connection $\nabla^k$ on $E^k$ and hence we obtain a twisted Dirac operator $D^{(E^k,\nabla^k)}$ on $M\times S^1$ such that the restriction of $D^{(E^k,\nabla^k)}$ to $M\times\{t\}$ is $D^k_t$. We infer that the spectral flow of $D^k$ is given by the index of $D^{(E^k,\nabla^k)}_+$, which can be obtained from \eqref{AS}. Clearly, $c_i(H)\in H^{2i}(S^{2m-1}\times S^1;\mathbb{Z})$ can only be non-trivial if $i=m$, and moreover, Vafa and Witten's argument in \cite{VaWi} uses the fact that

\[\int_{S^{2m-1}\times S^1}{c_{m}(H)}=\ind D^{(E^1,\nabla^1)}=1.\]
Consequently, since $\int_{M\times S^1}\hat{\mathbf{A}}(M\times S^1)=0$ ((cf. \cite[\S 11]{Lawson})), we obtain from 

\[c_i(E^k)=(f_k\times id)^\ast c_i(H),\]
\eqref{degform} and \eqref{degprod}

\begin{align*}
\ind D^{(E^k,\nabla^k)}&=\int_{M\times S^1}\hat{\mathbf{A}}(M\times S^1)\,\ch E^k=\deg(f_k\times id)\int_{S^{2m-1}\times S^1}{c_{m}(H)}=\deg f_k=k.
\end{align*}


\subsection{Proof of Theorem \ref{main}} 
We have defined in \eqref{connections} for each $k\in\mathbb{Z}$ a path of connections $\{\nabla^k_t\}_{t\in I}$ on $\Theta^N$ such that the spectral flow of the associated path of twisted Dirac operators $\{D^k_t\}_{t\in I}$ is $k$. Moreover, the connections at the endpoints $\nabla^k_0=d$ and $\nabla^k_1$ belong to $\Omega^N_{inv}$, since $D^k_1$ is unitarily equivalent to the invertible Dirac operator $D^k_0=D^d$.\\
We now claim that the endpoints $\nabla^k_1$ and $\nabla^l_1$ belong to different path components of $\Omega^N_{inv}$ if $k\neq l$. For, otherwise there exists a path $\{\nabla_t\}_{t\in I}$ in $\Omega^N_{inv}$ such that $D^{\nabla_0}=D^{\nabla^k_1}$ and $D^{\nabla_1}=D^{\nabla^l_1}$. Since $\ker D^{\nabla_t}=0$ for all $t\in I$, we infer from Lemma \ref{propSfl} iv) that 

\begin{align}\label{sfl0}
\sfl(\{D^{\nabla_t}\}_{t\in I})=0.
\end{align}
Now we consider the closed path $\{\widetilde{\nabla}_t\}_{t\in I}$ in $\Omega^N$ defined by 

\begin{align*}
\widetilde{\nabla}_t=\begin{cases}
\nabla^k_{3t},\quad 0\leq t\leq \frac{1}{3}\\
\nabla_{3t-1},\quad \frac{1}{3}\leq t\leq \frac{2}{3}\\
\nabla^l_{-3t+3},\quad \frac{2}{3}\leq t\leq 1
\end{cases}.
\end{align*}
Since $\widetilde{\nabla}$ is homotopic to a constant path in the convex space $\Omega^N$, we deduce from Lemma \ref{propSfl} v) and ii) that $\sfl(\{D^{\widetilde{\nabla}_t}\}_{t\in I})=0$. This implies by iii) and i) of the same lemma that

\[0=\sfl(\{D^{\widetilde{\nabla}_t}\}_{t\in I})=\sfl(\{D^{\nabla^k_t}\}_{t\in I})+\sfl(\{D^{\nabla_t}\}_{t\in I})-\sfl(\{D^{\nabla^l_t}\}_{t\in I})\]
Finally, it follows from \eqref{sfl0} that

\[k=\sfl(\{D^{\nabla^k_t}\}_{t\in I})=\sfl(\{D^{\nabla^l_t}\}_{t\in I})=l\]
which is a contradiction.

\thebibliography{9999999}

\bibitem[APS76]{AtiyahPatodi} M.F. Atiyah, V.K. Patodi, I.M. Singer, \textbf{Spectral Asymmetry and Riemannian Geometry III}, Proc. Cambridge Philos. Soc. \textbf{79}, 1976, 71--99

\bibitem[At85]{Atiyah} M.F. Atiyah, \textbf{Eigenvalues of the Dirac operator}, Lecture Notes in Math. \textbf{1111}, Springer, Berlin, 1985, 251--260

\bibitem[Ba96]{BaerHarmonic} C. B\"ar, \textbf{Metrics with Harmonic Spinors}, Geom. Funct. Anal. \textbf{6}, 1996, 899--942


\bibitem[BW85]{BoWo} B. Booss, K. Wojciechowski, \textbf{ Desuspension of splitting elliptic symbols \hbox{I}, } Ann. Glob. Anal. Geom. \textbf{3}, 1985, 337--383

\bibitem[BW93]{BoWoBuch} B. Boo{ss}-Bavnbek, K. P. Wojciechowski, \textbf{Elliptic Boundary Problems for Dirac Operators}, Birkh\"auser, 1993


\bibitem[Br93]{Bredon} G.E. Bredon, \textbf{Topology and Geometry}, Graduate Texts in Mathematics \textbf{139}, Springer, 1993

\bibitem[Da03]{Davaux} H. Davaux, \textbf{An optimal inequality between scalar curvature and spectrum of the Laplacian}, Math. Ann.  \textbf{327}, 2003, 271--292

\bibitem[Gi09]{Ginoux} N. Ginoux, \textbf{The Dirac Spectrum}, Lecture Notes in Mathematics \textbf{1976}, Springer-Verlag, 2009

\bibitem[He05]{Herzlich} M. Herzlich, \textbf{Extremality of the Vafa-Witten bound on the sphere}, Geom. Funct. Anal. \textbf{15}, 2005, 1153--1161

\bibitem[JL09]{Twisted} M. Jardim, R.F. Le\~ao, \textbf{On the eigenvalues of the twisted Dirac operator}, J. Math. Phys. \textbf{50}, 2009 

\bibitem[KM07]{Mrowka} P. Kronheimer, T. Mrowka, \textbf{Monopoles and three-manifolds}, New Mathematical Monographs, 10. Cambridge University Press, Cambridge,  2007

\bibitem[LM89]{Lawson} H.B. Lawson, M-L Michelsohn, \textbf{Spin Geometry}, Princeton University Press, 1989 

\bibitem[MP06]{Miatello} R.J. Miatello, R.A.  Podest\'{a}, \textbf{The spectrum of twisted Dirac operators on compact flat manifolds}, Trans. Amer. Math. Soc.  \textbf{358}, 2006, 4569--4603

\bibitem[VW84]{VaWi} C. Vafa, E. Witten, \textbf{Eigenvalue Inequalities for Fermions in Gauge Theories}, Commun. Math. Phys. \textbf{95}, 1984, 257--276

\bibitem[Wa13]{SpinorsIch} N.~Waterstraat, \textbf{A remark on the space of metrics having non-trivial harmonic spinors}, J.~Fixed Point Theory Appl. \textbf{13}, 2013, 143--149, arXiv:1206.0499 [math.SP]

\bibitem[Wa15]{Hamiltonian} N. Waterstraat, \textbf{A family index theorem for periodic Hamiltonian systems and bifurcation}, Calc. Var. Partial Differential Equations \textbf{52}, 2015, 727--753, arXiv:1305.5679 [math.DG]

\vspace{1cm}
Francesco Bei\\
Institut f\"ur Mathematik\\
Humboldt Universit\"at zu Berlin\\
Unter den Linden 6\\
10099 Berlin\\
Germany\\
E-mail: bei@math.hu-berlin.de

\vspace{1cm}
Nils Waterstraat\\
Institut f\"ur Mathematik\\
Humboldt Universit\"at zu Berlin\\
Unter den Linden 6\\
10099 Berlin\\
Germany\\
E-mail: waterstn@math.hu-berlin.de

\end{document}